 \newtheorem{remark}{Remark}
 \newtheorem{theorem}[remark]{Theorem}
 \newtheorem{corollary}[remark]{Corollary}
   \newtheorem{example}[remark]{Example}
\title{Partitioning a graph into defensive $k$-alliances}
\author{ Ismael G.
Yero$^{1}$, Sergio Bermudo$^{2}$,\\ Juan A.
Rodr\'{\i}guez-Vel\'{a}zquez$^{1}$ and   Jos\'{e} M.
Sigarreta$^{3}$ \\
    \\
$^1${\small Department of Computer Engineering and Mathematics}\\
{\small Universitat Rovira i Virgili,}  {\small Av. Pa\"{\i}sos
Catalans 26, 43007 Tarragona, Spain.} \\{\small
ismael.gonzalez\@@urv.cat, juanalberto.rodriguez\@@urv.cat}
\\
$^2${\small Department of Economy, Quantitative Methods} {\small and
Economic History}\\ {\small Pablo de Olavide University,} {\small
Carretera de Utrera Km. 1, 41013-Sevilla, Spain} \\ {\small
sbernav\@@upo.es}\\
$^3${\small Faculty of Mathematics,} {\small Autonomous University
of Guerrero}
\\
{\small Carlos E. Adame 5, Col. La Garita, Acapulco, Guerrero,
M\'{e}xico }\\{\small josemariasigarretaalmira@hotmail.com}}
\date{}
\begin{document}

\maketitle

\begin{abstract}
A defensive $k$-alliance in a graph  is a set $S$ of vertices with
the property that every vertex in $S$ has at least $k$ more
neighbors in $S$ than it has outside of $S$. A defensive
$k$-alliance $S$ is called global if it forms a dominating set. In
this paper we  study the problem of partitioning the vertex set of a
graph into (global) defensive $k$-alliances. The (global) defensive
$k$-alliance partition number of a graph $\Gamma=(V,E)$,
($\psi_{k}^{gd}(\Gamma)$) $\psi_k^{d}(\Gamma)$,  is defined to be
the maximum number of sets in a partition of $V$ such that each set
is a (global) defensive $k$-alliance. We obtain tight bounds on
$\psi_k^{d}(\Gamma)$ and $\psi_{k}^{gd}(\Gamma)$ in terms of several
parameters of the graph including the order, size, maximum  and
minimum degree, the algebraic connectivity and the isoperimetric
number. Moreover, we study the close relationships that exist among
partitions of $\Gamma_1\times \Gamma_2$ into (global) defensive
$(k_1+k_2)$-alliances and partitions of $\Gamma_i$ into (global)
defensive $k_i$-alliances, $i\in \{1,2\}$.
\end{abstract}

{\it Keywords:}  Defensive alliances, dominating sets, domination, isoperimetric number.

{\it AMS Subject Classification numbers:}   05C69; 05C70

\maketitle

\section{Introduction}

Since (defensive, offensive and dual) alliances in graph were first
introduced by P. Kristiansen, S. M. Hedetniemi and S. T. Hedetniemi
\cite{alliancesOne}, several authors have studied their mathematical
properties
\cite{cota,poweful,chellali,fava,FRS,GlobalalliancesOne,spectral,planar,yrs,GArs,kdaf,kdaf1,tesisShafique,SBF,line,offensive}.
 We are interested in a
generalization of defensive alliances, called $k$-alliances,
introduced by K. H. Shafique and R. D. Dutton in \cite{kdaf,kdaf1}.
We focus our attention in the problem of partitioning the vertex set
of a graph into defensive $k$-alliances. This problem  has been
previously studied by K. H. Shafique and R. D. Dutton
\cite{tesisShafique,porsi} and  the particular case $k=-1$ has been
studied by L. Eroh and R. Gera \cite{partitionTrees,partitionnumber}
and by T. W. Haynes and J. A. Lachniet \cite{partitionGrid}.

We begin by stating the terminology used. Throughout this article,
$\Gamma=(V,E)$ denotes a simple graph of order $|V|=n$ and size
$|E|=m$. We denote two adjacent vertices $u$ and $v$ by $u\sim v$,
 the degree of a vertex $v\in V$  by $\delta(v)$, the
minimum degree by $\delta$ and the maximum degree by $\Delta$. For a
nonempty set $X\subseteq V$, and a vertex $v\in V$,
 $N_X(v)$ denotes the set of neighbors  $v$ has in $X$:
$N_X(v):=\{u\in X: u\sim v\},$ and the degree of $v$ in $ X$ will be
denoted by $\delta_{X}(v)=|N_{X}(v)|.$  The subgraph induced by
$S\subset V$ will be denoted by  $\langle S\rangle $ and the
complement of the set $S$ in $V$ will be denoted by $\bar{S}$.

A nonempty set $S\subseteq V$ is a \emph{defensive  $k$-alliance} in
$\Gamma=(V,E)$,  $k\in \{-\Delta,\dots,\Delta\}$, if for every $
v\in S$,
\begin{equation}\label{cond-A-Defensiva} \delta _S(v)\ge \delta_{\bar{S}}(v)+k.\end{equation}

Notice that (\ref{cond-A-Defensiva}) is equivalent to
$$\delta (v)\ge 2\delta_{\bar{S}}(v)+k.$$

For example, if $k>1$, the star graph $K_{1,t}$ has no defensive
$k$-alliances and every set composed by two adjacent vertices in a
cubic graph is a defensive $(-1)$-alliance. For graphs having
defensive $k$-alliances, the \emph{defensive $k$-alliance number} of
$\Gamma$, denoted by $a_k^d(\Gamma)$, is defined as the minimum
cardinality of a defensive $k$-alliance in $\Gamma$. Notice that
$$a_{k+1}^d(\Gamma)\ge a_k^d(\Gamma).$$
 For the study of the
mathematical properties of $a_k^d(\Gamma)$ we cite \cite{yrs}.

A set $S\subset V$ is a  \emph{dominating
set}\label{conjuntodominante} in $\Gamma=(V,E)$ if for every vertex
$u\in \bar{S}$,  $\delta_S(u)>0$ (every vertex in $\bar{S}$ is
adjacent to at least one vertex in S). The \emph{domination number}
of $\Gamma$, denoted by $\gamma(\Gamma)$, is the minimum cardinality
of a dominating set in $\Gamma$.

A defensive $k$-alliance $S$ is called \emph{global} if it forms a
dominating set. For graphs having global defensive $k$-alliances,
the \emph{global defensive $k$-alliance number} of $\Gamma$, denoted
by $\gamma_{k}^{d}(\Gamma)$, is the minimum cardinality of a global
defensive $k$-alliance in $\Gamma$. Clearly,
$$
\gamma_{k+1}^d(\Gamma)\ge \gamma_{k}^d(\Gamma)\ge
\gamma(\Gamma)\quad {\rm and } \quad \gamma_k^d(\Gamma)\ge
a_k^d(\Gamma).$$
 For the study of the
mathematical properties of $\gamma_k^d(\Gamma)$ we cite \cite{GArs}.

The (\emph{global}) \emph{defensive $k$-alliance partition number}
of $\Gamma$, ($\psi_{k}^{gd}(\Gamma)$) $\psi_k^{d}(\Gamma)$, $k\in
\{-\Delta,...,\delta\}$, is defined to be the maximum number of sets
in a partition of $V (\Gamma)$ such that each set is a (global)
defensive $k$-alliance. Extreme cases are
$\psi_{-\Delta}^{d}(\Gamma)=n$, where each set composed of one
vertex is a defensive ($-\Delta$)-alliance, and
$\psi_{\delta}^{d}(\Gamma)=1$ for the case of a connected
$\delta$-regular graph where $V(\Gamma)$ is the only defensive
$\delta$-alliance. A graph $\Gamma$ is \emph{partitionable} into
(global) defensive $k$-alliances if ($\psi_{k}^{gd}(\Gamma)\ge 2$)
$\psi_{k}^{d}(\Gamma)\ge 2$. Hereafter we will say that
$\Pi_r(\Gamma)=\{V_1,V_2,...,V_r\}$ is a partition of $\Gamma$ into
$r$ (global) defensive $k$-alliances.

Notice that if every vertex of $\Gamma$ has even
 degree and $k$ is odd, $k=2l-1$,
 then
every (global) defensive $(2l-1)$-alliance in $\Gamma$ is a (global)
defensive $(2l)$-alliance and vice versa. Hence, in such a case,
$a^d_{2l-1}(\Gamma)=a^d_{2l}(\Gamma)$,
$\Gamma^d_{2l-1}(\Gamma)=\gamma^d_{2l}(\Gamma)$,
$\psi^d_{2l-1}(\Gamma)=\psi^d_{2l}(\Gamma)$ and
$\psi^{gd}_{2l-1}(\Gamma)=\psi^{gd}_{2l}(\Gamma)$.

Analogously, if every vertex of $\Gamma$ has odd
 degree and $k$ is even, $k=2l$,
 then
every defensive $(2l)$-alliance in $\Gamma$ is a defensive
$(2l+1)$-alliance and vice versa. Hence, in such a case,
$a^{d}_{2l}(\Gamma)=a^{d}_{2l+1}(\Gamma)$,
$\gamma^{d}_{2l}(\Gamma)=\gamma^{d}_{2l+1}(\Gamma)$,
$\psi^{d}_{2l}(\Gamma)=\psi^{d}_{2l+1}(\Gamma)$ and
$\psi^{gd}_{2l}(\Gamma)=\psi^{gd}_{2l+1}(\Gamma)$.

\section{Partitioning a graph into defensive $k$-alliances}

\begin{example}\label{Ej1} {\rm
Let $k$ and $r$ be integers such that $r>1$ and $r+k>0$  and let
${\cal H}$ be a family of graphs whose vertex set is
$V=\cup_{i=1}^r{V_i}$ where, for every $V_i$, $\langle
V_i\rangle\cong K_{r+k}$ and $\delta_{V_j}(v)=1$, for every $v\in
V_i$ and $j\ne i$. Notice that $\{V_1,V_2,...,V_r\}$ is a partition
of the graphs belonging to ${\cal H}$ into $r$ global defensive
$k$-alliances. A particular family of graphs included in ${\cal H}$
is $K_{r+k}\times K_r$.}
\end{example}

Hereafter, ${\cal H}$ will denote the family of graphs defined in the above example.

From the following relation between the defensive $k$-alliance
number, $a_{k}^{d}(\Gamma)$, and $\psi_{k}^{d}(\Gamma)$
 we obtain that lower bounds on $a_{k}^{d}(\Gamma)$ lead to upper
 bounds on
 $\psi_{k}^{d}(\Gamma)$:\begin{equation}\label{n-psi(no-global)}
a_{k}^{d}(\Gamma)\psi_{k}^{d}(\Gamma)\le n.
\end{equation}
For instance, it was shown in \cite{yrs} that
\begin{equation}\label{cotainf}a_{k}^{d}(\Gamma)\ge \left\lceil
 \frac{\delta+k+2}{2}\right\rceil.\end{equation}

An example of equality in the above bound is provided by the graphs belonging to the
family ${\cal H}$, for which we obtain $a_{k}^{d}(\Gamma)=r+k$.

By (\ref{n-psi(no-global)}) and (\ref{cotainf}) we obtain the
following bound,
$$\psi_{k}^{d}(\Gamma)\le \left\{\begin{array}{c}
                                          \left\lfloor\frac{2n}{\delta+k+2}\right\rfloor, \quad \delta+k\quad \mbox{\rm even}\\
                                          \\
                                            \left\lfloor\frac{2n}{\delta+k+3}\right\rfloor, \quad \delta+k\quad \mbox{\rm odd.}
                                         \end{array}\right.$$
This bound gives the exact value of $\psi_{k}^{d}(\Gamma)$, for
instance,  for every $\Gamma\in {\cal H}$, where
$\psi_{k}^{d}(\Gamma)=r$, and in the following cases:
$\psi_{-1}^{d}(K_4\times C_4)=5$, $\psi_{0}^{d}(K_3\times
C_4)=\psi_{-1}^{d}(K_2\times C_4)=4$ and $\psi_{1}^{d}(K_2\times
C_4)=2$.


Analogously, for global alliances we have
\begin{equation}\label{n-psi(global)}
\gamma_{k}^{d}(\Gamma)\psi_{k}^{gd}(\Gamma)\le n.
\end{equation}
One example of bounds on $\gamma_{k}^{d}(\Gamma)$ is the following, obtained
in \cite{GArs},
\begin{equation}\label{cotainfGDAN}
\gamma_{k}^{d}(\Gamma)\ge \left\lceil
\displaystyle\frac{n}{\left\lfloor\frac{\Delta-k}{2}\right\rfloor
+1}\right\rceil.
\end{equation}
For the graphs in ${\cal H}$, the above bound gives the exact value
$\gamma_{k}^{d}(\Gamma)=r+k $. Thus, the bound obtained by combining
(\ref{n-psi(global)}) and (\ref{cotainfGDAN}),
$$\psi_{k}^{gd}(\Gamma)\le
\left\lfloor\frac{\Delta-k}{2}\right\rfloor+1,$$
leads to the exact value of $\psi_{k}^{gd}(\Gamma)=r$ for every
$\Gamma\in {\cal H}$. Even so, this bound can be improved.

\newpage
\begin{theorem}
For every graph $\Gamma$ partitionable into global defensive $k$-alliances,
\begin{itemize}
\item[{\rm (i)}] $\psi_{k}^{gd}(\Gamma) \le \lfloor\frac{\sqrt{k^2+4n}-k}{2}\rfloor, $
\item[{\rm (ii)}] $\psi_{k}^{gd}(\Gamma) \le
\lfloor\frac{\delta-k+2}{2}\rfloor$.
\end{itemize}
\end{theorem}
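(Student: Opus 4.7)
The plan is to exploit two basic consequences of the partition hypothesis simultaneously, then read off (i) and (ii) by estimating $n$ and $\delta$ respectively.

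Let $\Pi_r(\Gamma)=\{V_1,\ldots,V_r\}$ with $r=\psi_k^{gd}(\Gamma)$. The first step is a lower bound on the "outside degree" of any vertex. Fix $i$ and $v\in V_i$. Since for each $j\neq i$ the set $V_j$ is a \emph{global} defensive $k$-alliance, $V_j$ dominates $v$, so $\delta_{V_j}(v)\ge 1$. Summing over the $r-1$ other classes gives
\[
\delta_{\bar{V_i}}(v)\;=\;\sum_{j\neq i}\delta_{V_j}(v)\;\ge\;r-1.
\]
The second step invokes the defensive $k$-alliance condition on $V_i$ to translate this into a lower bound on the inside degree:
\[
\delta_{V_i}(v)\;\ge\;\delta_{\bar{V_i}}(v)+k\;\ge\;r+k-1.
\]
These two inequalities are the engine; everything else is bookkeeping.

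For (i), the inequality $\delta_{V_i}(v)\ge r+k-1$ says that every vertex of $V_i$ has at least $r+k-1$ neighbours inside $V_i$, so $|V_i|\ge r+k$. Summing over $i$ yields $n=\sum_{i=1}^{r}|V_i|\ge r(r+k)$, i.e.\ $r^{2}+kr-n\le 0$. Solving this quadratic for $r$ (and using that $r$ is an integer) gives $r\le\lfloor(\sqrt{k^{2}+4n}-k)/2\rfloor$.

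For (ii), pick any vertex $v$ of minimum degree $\delta$ and let $V_i$ be the class containing it. Adding the two basic inequalities,
\[
\delta\;=\;\delta_{V_i}(v)+\delta_{\bar{V_i}}(v)\;\ge\;(r+k-1)+(r-1)\;=\;2r+k-2,
\]
so $r\le(\delta-k+2)/2$, and integrality gives the stated floor. I do not expect a real obstacle here: both parts follow from the same two-line observation, and the only subtlety is remembering that globality provides one neighbour per \emph{other} class, which contributes the $r-1$ that drives both bounds.
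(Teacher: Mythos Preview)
Your proof is correct and follows essentially the same approach as the paper: both derive $\delta_{\bar{V_i}}(v)\ge r-1$ from the domination of the other classes, combine it with the $k$-alliance inequality to get $|V_i|\ge r+k$ for (i) and $\delta\ge 2(r-1)+k$ for (ii), and then solve for $r$. Your write-up is in fact slightly more explicit (spelling out that it is the \emph{other} $V_j$'s dominating $v$ that yields one neighbour per class), but the logic is identical.
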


\begin{proof}
Since, every $V_i\in \Pi_r(\Gamma)$ is a dominating set, we have
that for every $v\in V_i$, $\delta_{\overline{V_i}}(v)\ge r-1$.
Thus, the bounds are obtained as follow.
\begin{itemize}

\item[(i)] $|V_i|-1\ge \delta_{V_i}(v)\ge \delta_{\overline{V_i}}(v)+k\ge
r-1+k$, so $n=\sum_{i=1}^r|V_i|\ge r(r+k)$. By solving the
inequality $r^2+kr-n\le 0$ we obtain the result.

\item[(ii)]  Taking $v\in V_i$ as a vertex of minimum degree we obtain the result from
$\delta=\delta(v)\ge 2\delta_{\overline{V_i}}(v)+k\ge 2(r-1)+k$.
\end{itemize}

\end{proof}

The above bounds are attained, for instance, in the following cases:
$\psi_{-1}^{gd}(K_4\times C_4)=4$, $\psi_{0}^{gd}(K_3\times C_4)=3$,
$\psi_{1}^{gd}(K_2\times C_4)=2$  and  $\psi_1^{gd}(P)=2$, where $P$
denotes the Petersen graph.

\begin{remark}
For every $k\in \{1-\delta,...,\delta\}$, if
$\psi_{k}^{gd}(\Gamma)\ge 2$, then
$$\gamma_{k}^{d}(\Gamma)+\psi_{k}^{gd}(\Gamma)\le \frac{n+4}{2}.$$
\end{remark}

\begin{proof}
By (\ref{n-psi(global)}) we have
$\gamma_{k}^{d}(\Gamma)+\psi_{k}^{gd}(\Gamma)\le
\frac{n+\left(\psi_{k}^{gd}(\Gamma)\right)^2}{\psi_{k}^{gd}(\Gamma)}$.
On the other hand, if $k\in \{1-\delta,...,\delta\}$, then
$\gamma_{k}^{d}(\Gamma)\ge 2$. Moreover, if $\psi_{k}^{gd}(\Gamma)\ge 2$,  then  $ \gamma_{k}^{d}(\Gamma)\le \frac{n}{2}$. So,
$2\le \psi_{k}^{gd}(\Gamma)\le \frac{n}{\gamma_{k}^{d}(\Gamma)}\le \frac{n}{2}$. As a consequence, the result is obtained as
follow,
$$\max_{2\leq x \leq \frac{n}{\gamma_{k}^{d}(\Gamma)}}\left\{\frac{n+x^2}{x}\right\} =\max
\left\{\frac{n+4}{2},\frac{n+(\gamma_{k}^{d}(\Gamma))^{2}}{\gamma_{k}^{d}(\Gamma)}\right\}=
\frac{n+4}{2}.$$
\end{proof}
Example of equality in above bound is $\gamma_{-1}^{d}(C_4\times
K_2)+\psi_{-1}^{gd}(C_4\times K_2) = 6.$

\begin{theorem}\label{cortearistas}
Let $C_{(r,k)}^{gd} (\Gamma)$ be the minimum number of edges having
its endpoints in different sets of a partition of $\Gamma$ into
$r\ge 2$ global defensive $k$-alliances. Then
\begin{itemize}
\item[{\rm (i)}] $C_{(r,k)}^{gd} (\Gamma)\ge
\frac{1}{2}r(r-1)\gamma_k^d(\Gamma)$,

\item[{\rm (ii)}]  $C_{(r,k)}^{gd} (\Gamma)\ge
\frac{1}{2}r(r-1)(r+k)$,

\item[{\rm (iii)}]  $C_{(r,k)}^{gd} (\Gamma) \le \frac{2m-nk}{4}.$

\item[{\rm (iv)}] $C_{(r,k)}^{gd} (\Gamma) =
\frac{1}{2}r(r-1)\gamma_k^d(\Gamma)=\frac{1}{2}r(r-1)(r+k)=\frac{2m-nk}{4}$
if and only if $\Gamma \in {\cal H}$.

\end{itemize}
\end{theorem}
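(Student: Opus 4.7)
The plan is to fix a partition $\Pi_r(\Gamma)=\{V_1,\dots,V_r\}$ into global defensive $k$-alliances and analyse the edges that cross between parts. I will write $e(V_i,V_j)$ for the number of edges with one endpoint in $V_i$ and the other in $V_j$, so that $C_{(r,k)}^{gd}(\Gamma)=\sum_{i<j} e(V_i,V_j)$.

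For (i), the key observation is that each $V_j$ is a dominating set, so every $u\in V_i\subseteq \overline{V_j}$ (with $i\ne j$) contributes at least one edge to $e(V_i,V_j)$; summing over $u\in V_i$ gives $e(V_i,V_j)\ge |V_i|$, and by symmetry $e(V_i,V_j)\ge\max\{|V_i|,|V_j|\}\ge\gamma_k^d(\Gamma)$, since each part is itself a global defensive $k$-alliance. Summing over the $\binom{r}{2}$ pairs yields the bound. For (ii), I would re-use the observation from the proof of the previous theorem: because every $V_j$ dominates, any $v\in V_i$ has at least one neighbor in each of the $r-1$ other parts, so $\delta_{\overline{V_i}}(v)\ge r-1$. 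The defensive inequality then forces $\delta_{V_i}(v)\ge r-1+k$, hence $|V_i|\ge r+k$ and $n\ge r(r+k)$. Adding $e(V_i,\overline{V_i})=\sum_{v\in V_i}\delta_{\overline{V_i}}(v)\ge (r-1)|V_i|$ over $i$ and dividing by two gives $C_{(r,k)}^{gd}(\Gamma)\ge\tfrac{(r-1)n}{2}\ge\tfrac{r(r-1)(r+k)}{2}$.

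For (iii), the defensive inequality at each $v\in V_i$ combined with $\delta_{V_i}(v)+\delta_{\overline{V_i}}(v)=\delta(v)$ rearranges to $2\delta_{V_i}(v)\ge\delta(v)+k$. Summing over all vertices shows that the total number of edges inside the parts is at least $\tfrac{2m+nk}{4}$, and subtracting this quantity from $m$ yields $C_{(r,k)}^{gd}(\Gamma)\le\tfrac{2m-nk}{4}$.

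For (iv), the ``if'' direction is a direct computation from the definition of ${\cal H}$, using $|V_i|=r+k$, $\langle V_i\rangle\cong K_{r+k}$, the one-neighbor-in-each-other-part condition, and $\gamma_k^d(\Gamma)=r+k$ (noted in Example~\ref{Ej1} and the remarks following it). For the ``only if'' direction, equality in (ii) forces the two nested inequalities to be tight, yielding simultaneously $\delta_{\overline{V_i}}(v)=r-1$ (so each $v\in V_i$ has exactly one neighbor in each $V_j$, $j\ne i$) and $|V_i|=r+k$ for every $i$; equality in (iii) forces the defensive inequality to be tight at every vertex, giving $\delta_{V_i}(v)=r-1+k=|V_i|-1$, so $\langle V_i\rangle\cong K_{r+k}$. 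These structural conclusions are exactly the defining properties of ${\cal H}$. I expect the delicate point to be part (iv): keeping careful track of \emph{which} nested inequality in parts (i)--(iii) must be saturated in order to recover the size of each part, the completeness of each induced subgraph, and the single-edge condition between distinct parts all at once.
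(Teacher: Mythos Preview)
Your proposal is correct and proceeds along the same lines as the paper: domination forces $\delta_{\overline{V_i}}(v)\ge r-1$ for the lower bounds, the summed defensive inequality yields (iii), and (iv) is recovered by tracing equality through those chains. The counting is organized a little differently---for (i) you bound each $e(V_i,V_j)\ge\gamma_k^d(\Gamma)$ pairwise, whereas the paper orders the parts and counts ``forward'' to get $C\ge\tfrac{x}{2}r(r-1)$ with $x=\min_i|V_i|$, and for (ii) your intermediate bound $C\ge\tfrac{(r-1)n}{2}$ is in fact slightly sharper than the paper's $\tfrac{x r(r-1)}{2}$, which also streamlines your equality analysis in (iv)---but the underlying ideas are the same.
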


\begin{proof}
Let $x=\displaystyle\min_{V_i\in \Pi_r(\Gamma)}|V_i|$. From the fact
that every set of $\Pi_r(\Gamma)$ is a dominating set, we obtain
that the number of edges adjacent to $v\in V_i$ with one endpoint in
$\cup_{j=i+1}^r V_j$ is bounded by $\sum_{
j=i+1}^r\delta_{V_j}(v)\ge r-i$. Therefore,
\begin{equation}\label{boundx}
C_{(r,k)}^{gd} (\Gamma)\ge \sum_{i=1}^{r-1}(r-i)|V_{i}|
\ge x\sum_{i=1}^{r-1}(r-i)=\frac{x}{2}r(r-1).
\end{equation}
           Since  every $V_i\in \Pi_r(\Gamma)$ is a
global defensive $k$-alliance, we have $x\ge r+k$ and $x\ge
\gamma_k^d(\Gamma)$, as a consequence,  ({\it i}) and ({\it ii})
follow.

Proof of ({\it iii}). In order to obtain the upper bound we note
that the number of edges in $\Gamma$ with one endpoint in $V_i$ and
the other endpoint in $V_j$ is  $C(V_i,V_j)=\displaystyle\sum_{v\in
V_i}\delta_{{V_j}}(v)=\sum_{v\in V_j}\delta_{{V_i}}(v)$. Hence,
\begin{align*} 2m=\sum_{i=1}^r\sum_{v\in V_i}\delta(v)&\ge
2\sum_{i=1}^r\sum_{v\in
V_i}\delta_{\overline{V_i}}(v)+k\sum_{i=1}^r|V_i| \\
&=2\sum_{i=1}^r \sum_{v\in V_i} \sum_{j=1, j\ne i}^r\delta_{{V_j}}(v)+kn\\
&=2\sum_{i=1}^r \sum_{j=1, j\ne i}^r\sum_{v\in V_i}\delta_{{V_j}}(v)+kn\\
&=2\sum_{i=1}^r \sum_{j=1, j\ne i}^rC(V_i,V_j)+nk \\
&=4C_{(r,k)}^{gd} (\Gamma)+nk.
\end{align*}

Proof of ({\it iv}). $(\Rightarrow)$ If for some $V_i\in \Pi_r(\Gamma)$ there exists $v\in V_i$ such that $\delta_{V_i}(v)>\delta_{\overline{V_i}}(v)+k$, then, by analogy to the proof of (iii) we obtain $C_{(r,k)}^{gd} (\Gamma) <
\frac{2m-nk}{4}$. Therefore, if $C_{(r,k)}^{gd} (\Gamma) =\frac{2m-nk}{4}$, then for every $V_i\in
\Pi_r(\Gamma)$, and for every $v\in V_i$, we have 
\begin{equation}\label{sonfront}
\delta_{V_i}(v)=\delta_{\overline{V_i}}(v)+k.
\end{equation}
Moreover, if for some $V_i\in \Pi_r(\Gamma)$ there exists $v\in V_i$ such that $\displaystyle\sum_{j\ne i}\delta_{V_i}(v)>r-1$, then, by analogy to the proof of (i) and (ii) we obtain 
$C_{(r,k)}^{gd} (\Gamma) >\frac{1}{2}r(r-1)\gamma_k^d(\Gamma)$ and $C_{(r,k)}^{gd} (\Gamma) >\frac{1}{2}r(r-1)(r+k)$. Therefore, if 
$C_{(r,k)}^{gd} (\Gamma) =\frac{1}{2}r(r-1)\gamma_k^d(\Gamma)=\frac{1}{2}r(r-1)(r+k)$, then for every $V_i\in
\Pi_r(\Gamma)$, and for every $v\in V_i$, we have 
\begin{equation}\label{dominantes}
\delta_{\overline{V_i}}(v)=\displaystyle\sum_{j\ne i}\delta_{V_i}(v)=r-1.
\end{equation}
So, by (\ref{sonfront}) and (\ref{dominantes}) we obtain that for every  $V_i\in
\Pi_r(\Gamma)$,  $\langle V_i\rangle$  is regular of
degree $r+k-1$.  Thus, $\Gamma$
is a regular graph of degree $2(r-1)+k$ and, by
$\frac{1}{2}r(r-1)\gamma_k^d(\Gamma)=\frac{1}{2}r(r-1)(r+k)=\frac{2m-nk}{4}$
 we have $n(\Gamma)=r(r+k)$ and $\gamma_k^d(\Gamma)=r+k$.  Hence,
$|V_i|=r+k$, so $\langle V_i\rangle\cong K_{r+k}$. Moreover,  as
every $V_j\in \Pi_r(\Gamma)$ is a dominating set, by (\ref{dominantes}) we have
$\delta_{V_j}(v)=1$, for every $v\in V_i$, $i\ne j$. Therefore,
$\Gamma \in {\cal H}$. $(\Leftarrow)$ The result is immediate.
\end{proof}




By (\ref{boundx}) and Theorem \ref{cortearistas} (iii) we obtain the
following result.

\begin{corollary} \label{coroEqualcardinalityone}
For  every graph  $ \Gamma$ partitionable into $r$ global defensive
$k$-alliances of equal cardinality,  $r\le \frac{2(m+n)-kn}{2n}$.
\end{corollary}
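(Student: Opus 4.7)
The plan is to chain the two bounds the hint points to. Since all sets $V_i$ in the partition have the same cardinality, the equal-size assumption forces $|V_i|=n/r$ for every $i$, which is the key simplification that feeds directly into inequality (\ref{boundx}).

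First, I would substitute $|V_i|=n/r$ into the lower bound (\ref{boundx}) established inside the proof of Theorem \ref{cortearistas}. Because every $|V_i|$ equals the common value $n/r$, the minimum $x$ in that inequality is $n/r$ and one obtains
\[
C_{(r,k)}^{gd}(\Gamma)\ge \sum_{i=1}^{r-1}(r-i)\,\frac{n}{r}=\frac{n}{r}\cdot\frac{r(r-1)}{2}=\frac{n(r-1)}{2}.
\]
Next, I would combine this with the upper bound $C_{(r,k)}^{gd}(\Gamma)\le \frac{2m-nk}{4}$ furnished by Theorem \ref{cortearistas}~(iii). Stacking the two inequalities yields $\frac{n(r-1)}{2}\le \frac{2m-nk}{4}$, i.e. $2n(r-1)\le 2m-nk$, and isolating $r$ gives exactly
\[
r\le 1+\frac{2m-nk}{2n}=\frac{2(m+n)-kn}{2n},
\]
as claimed.

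There is essentially no obstacle: the only subtle point is to notice that the equal-cardinality hypothesis is precisely what is needed to turn the rough bound $|V_i|\ge x$ used inside (\ref{boundx}) into an equality $|V_i|=n/r$, which removes the slack between the two estimates and produces a clean closed-form bound on $r$. Everything else is algebraic rearrangement.
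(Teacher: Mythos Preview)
Your argument is correct and is exactly the route the paper indicates: combine the lower bound coming from (\ref{boundx}) with the upper bound of Theorem~\ref{cortearistas}(iii), using $|V_i|=n/r$. One small technical caveat: strictly speaking, $C_{(r,k)}^{gd}(\Gamma)$ is defined as a minimum over all partitions, so you should apply both the derivation behind (\ref{boundx}) and the derivation behind (iii) directly to the given equal-cardinality partition (bounding its own crossing-edge count $C$ from below and above), rather than routing through the symbol $C_{(r,k)}^{gd}(\Gamma)$; the algebra and conclusion are unchanged.
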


A family of graphs that achieve equality for Corollary
\ref{coroEqualcardinalityone} is the family ${\cal H}$ defined in
Example \ref{Ej1}.




By Theorem \ref{cortearistas}  and (\ref{cotainf})  we obtain the following two
necessary conditions for the existence of a partition of a graph into
$r$ global defensive $k$-alliances.

\begin{corollary}
If for a graph $\Gamma$, $k>\frac{2m-r(r-1)(\delta+2)}{n+r(r-1)}$ or $k>\frac{2(m-r^2(r-1))}{n+2r(r-1)}$,
the $\Gamma$ cannot be partitioned into $r$ global defensive
$k$-alliances.
\end{corollary}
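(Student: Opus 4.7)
The plan is to argue by contrapositive: assume $\Gamma$ admits a partition into $r$ global defensive $k$-alliances and then derive an upper bound on $k$ by sandwiching $C_{(r,k)}^{gd}(\Gamma)$ between the bounds from Theorem \ref{cortearistas}. Each of the two claimed inequalities will come from a different combination of these bounds.

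First I would combine Theorem \ref{cortearistas}(i) with Theorem \ref{cortearistas}(iii), which gives
\[
\tfrac{1}{2}r(r-1)\gamma_k^d(\Gamma)\;\le\;C_{(r,k)}^{gd}(\Gamma)\;\le\;\tfrac{2m-nk}{4}.
\]
Using the lower bound $\gamma_k^d(\Gamma)\ge a_k^d(\Gamma)\ge\lceil\frac{\delta+k+2}{2}\rceil\ge\frac{\delta+k+2}{2}$ from (\ref{cotainf}), I get $r(r-1)(\delta+k+2)\le 2m-nk$. Isolating $k$ and regrouping yields $k\bigl(n+r(r-1)\bigr)\le 2m-r(r-1)(\delta+2)$, i.e.
\[
k\;\le\;\frac{2m-r(r-1)(\delta+2)}{n+r(r-1)},
\]
so violating this inequality rules out the existence of the partition, which is the first conclusion.

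For the second inequality, I would instead combine Theorem \ref{cortearistas}(ii) with Theorem \ref{cortearistas}(iii): $\tfrac{1}{2}r(r-1)(r+k)\le\tfrac{2m-nk}{4}$, hence $2r(r-1)(r+k)\le 2m-nk$. Collecting the $k$ terms gives $k\bigl(n+2r(r-1)\bigr)\le 2\bigl(m-r^2(r-1)\bigr)$, that is,
\[
k\;\le\;\frac{2(m-r^2(r-1))}{n+2r(r-1)},
\]
and again the contrapositive yields the stated impossibility.

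There is really no conceptual obstacle here: the whole statement is an arithmetic repackaging of Theorem \ref{cortearistas} together with (\ref{cotainf}). The only care needed is to preserve the direction of the inequality when dividing by $n+r(r-1)>0$ and $n+2r(r-1)>0$ (both positive for $r\ge 2$), so that the final strict inequality $k>\cdots$ indeed contradicts the derived $k\le\cdots$.
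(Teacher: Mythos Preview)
Your proof is correct and follows exactly the route the paper indicates: combining the lower bounds (i) and (ii) of Theorem~\ref{cortearistas} with the upper bound (iii), and invoking (\ref{cotainf}) (together with $\gamma_k^d(\Gamma)\ge a_k^d(\Gamma)$) for the first inequality. The paper merely states that the corollary follows from Theorem~\ref{cortearistas} and (\ref{cotainf}) without spelling out the algebra, so your write-up is a faithful expansion of that.
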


By the above corollary we conclude, for instance, that the 3-cube
graph cannot be partitioned into $r>2$ global defensive
$k$-alliances.


\begin{remark}
The size of the subgraph induced by a set belonging to a partition
of $\Gamma$ into $r$ global defensive $k$-alliances is bounded below
by $\frac{1}{2}\gamma_k^{d}(\Gamma)(r+k-1)$.
\end{remark}

\begin{proof}
The result follows from the fact that for every $V_i\in
\Pi_r(\Gamma)$, $\displaystyle\sum_{v\in V_i}\delta_{V_i}(v)\ge
((r-1)+k)|V_i|\ge (r-1+k)\gamma_k^{d}(\Gamma).$
\end{proof}
The above bound is tight as we can check by taking $\Gamma\in {\cal
H}$.

\subsection{Isoperimetric number,  bisection and  $k$-alliances}
The isoperimetric number of $\Gamma$ is defined as
$$ {\bf i}(\Gamma):= \displaystyle\min_{S\subset V(\Gamma): |S|\le \frac{n}{2}} \left\{\frac{\sum_{v\in S}\delta_{\overline{S}}(v)}{|S|}\right\}.$$
As a consequence of Theorem \ref{cortearistas} (iii) we obtain the
following result.

\begin{corollary}
If there exists a partition $\Pi_r$ of $\Gamma$ into $r\ge2$ global
defensive $k$-alliances such that, for every $V_i\in \Pi_r$,
$|V_i|\le \frac{n}{2}$, then
 $${\bf i}(\Gamma)\le \frac{2m-nk}{2n}.$$
\end{corollary}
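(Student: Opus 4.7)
The plan is to exploit the elementary fact that if nonnegative reals satisfy $\sum_i a_i \le C \sum_i b_i$, then $\min_i a_i/b_i \le C$, applied to the per-part edge boundaries and part sizes.

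First I would set up notation: for each $V_i \in \Pi_r$, write $c_i := \sum_{v \in V_i}\delta_{\overline{V_i}}(v) = |E(V_i, \overline{V_i})|$. Since every edge counted in $C_{(r,k)}^{gd}(\Gamma)$ has its two endpoints in two distinct parts, summing over $i$ double-counts each crossing edge, so
\[
\sum_{i=1}^r c_i \;=\; 2\,C_{(r,k)}^{gd}(\Gamma).
\]
By Theorem \ref{cortearistas}(iii) this gives $\sum_{i=1}^r c_i \le \frac{2m-nk}{2}$. On the other hand, $\sum_{i=1}^r |V_i| = n$.

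Next I would invoke the min-ratio principle: if $c_i/|V_i| > \frac{2m-nk}{2n}$ for every $i$, then summing $c_i > \frac{2m-nk}{2n}\,|V_i|$ over $i$ would yield $\sum_i c_i > \frac{2m-nk}{2}$, contradicting the bound above. Hence there exists some index $i_0$ with
\[
\frac{c_{i_0}}{|V_{i_0}|} \;\le\; \frac{2m-nk}{2n}.
\]
Finally, since the hypothesis guarantees $|V_{i_0}| \le n/2$, the set $V_{i_0}$ is an admissible candidate in the minimum defining ${\bf i}(\Gamma)$, so ${\bf i}(\Gamma) \le c_{i_0}/|V_{i_0}| \le \frac{2m-nk}{2n}$, which is the desired inequality.

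There is no real obstacle here; the only thing to be a little careful about is the double-counting step (making sure $\sum_i c_i = 2 C_{(r,k)}^{gd}$ rather than $C_{(r,k)}^{gd}$) and noting that the size hypothesis $|V_i|\le n/2$ is exactly what makes each part eligible in the definition of ${\bf i}(\Gamma)$.
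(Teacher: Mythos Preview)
Your proof is correct and uses essentially the same ingredients as the paper: the identity $\sum_i c_i = 2C_{(r,k)}^{gd}(\Gamma)$, Theorem~\ref{cortearistas}(iii), and the size hypothesis $|V_i|\le n/2$ to make parts admissible in the definition of ${\bf i}(\Gamma)$. The only cosmetic difference is that the paper, instead of extracting a single part of minimum ratio, uses the admissibility of \emph{every} $V_i$ to write $|V_i|\,{\bf i}(\Gamma)\le c_i$ for all $i$ and then sums to get $n\,{\bf i}(\Gamma)\le \sum_i c_i \le \frac{2m-nk}{2}$ directly.
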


\begin{proof}
For every $V_i\in \Pi_r$ we have $|V_i|{\bf i}(\Gamma)\le
\displaystyle\sum_{v\in V_i}\delta_{\overline{V_i}}(v)=\sum_{v\in
V_i}\sum_{j=1,j\ne i}^r\delta_{V_j}(v)$. Hence,
$$n{\bf i}(\Gamma)={\bf i}(\Gamma)\sum_{i=1}^r|V_i|\le  \sum_{i=1}^r\sum_{v\in
V_i}\sum_{j=1,j\ne i}^r\delta_{V_j}(v)=2C_{(r,k)}^{gd}(\Gamma)\le
\frac{2m-nk}{2}.$$
\end{proof}
Example of equality in above bound is the graph $\Gamma=C_3\times
C_3$ for $k=0$. That is, $C_3\times C_3$ can be partitioned into
$r=3$ global defensive $0$-alliances of cardinality $3$, moreover,
${\bf i}(C_3\times C_3)=2$. Other example is the $3$-cube graph
$\Gamma=C_4\times K_2$, for $k=1$. In this case each copy of the
cycle $C_4$ is a global defensive $1$-alliance and  ${\bf
i}(C_4\times K_2)=1$.

Notice that if 
${\bf i}(\Gamma)> \frac{2m-nk}{2n},$ 
then $\Gamma$ cannot be partitioned into $r\ge 2$ global defensive
$k$-alliances  with the condition that the cardinality of every set
in the partition is at most $\frac{n}{2}.$  One example of this is the graph
$\Gamma=C_3\times C_3$ for $k\ge1$.

\begin{theorem} For any graph $\Gamma$,

\begin{itemize}
\item[{\rm (i)}]
if  $\Gamma$ is partitionable into global defensive $k$-alliances,
then
 $$\psi_k^{gd}(\Gamma) \le  \Delta+1-{\bf i}(\Gamma)-k,$$
 \item[{\rm (i)}] if  $\Gamma$ is partitionable into  defensive $k$-alliances,
then  $$a_k^d(\Gamma)\ge {\bf i}(\Gamma)+k+1.$$
\end{itemize}
\end{theorem}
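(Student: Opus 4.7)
The plan is to treat both parts as two applications of the same basic idea: use the isoperimetric inequality on a conveniently chosen set, and combine it with the defensive $k$-alliance condition together with the obvious bound $\delta_S(v)\le |S|-1$ for $v\in S$.

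For part (i), I would take a partition $\Pi_r(\Gamma)=\{V_1,\dots,V_r\}$ realizing $r=\psi_k^{gd}(\Gamma)$ and let $V_i$ be a smallest block, so that $|V_i|\le n/r\le n/2$ (which is legitimate because the hypothesis gives $r\ge 2$). Since every $V_j$, $j\ne i$, is a dominating set, each $v\in V_i$ satisfies $\delta_{\overline{V_i}}(v)=\sum_{j\ne i}\delta_{V_j}(v)\ge r-1$. The defensive $k$-alliance condition then forces $\delta_{V_i}(v)\ge r-1+k$, and using $\delta(v)\le\Delta$ I obtain the pointwise bound $\delta_{\overline{V_i}}(v)\le \Delta-(r-1)-k$. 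Summing over $v\in V_i$ and applying $|V_i|\,{\bf i}(\Gamma)\le \sum_{v\in V_i}\delta_{\overline{V_i}}(v)$, the factor of $|V_i|$ cancels and I read off $r\le \Delta+1-k-{\bf i}(\Gamma)$, as claimed.

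For part (ii), I would first observe that the hypothesis ``partitionable into defensive $k$-alliances'' ($\psi_k^d(\Gamma)\ge 2$) guarantees the existence of a defensive $k$-alliance of cardinality at most $n/2$ (the smaller block of any two-set partition), so $a_k^d(\Gamma)\le n/2$. Then let $S$ be a defensive $k$-alliance of minimum cardinality, so that $|S|\le n/2$ and the isoperimetric definition applies to $S$. For each $v\in S$ one has the trivial bound $\delta_S(v)\le |S|-1$ combined with $\delta_S(v)\ge \delta_{\overline{S}}(v)+k$, giving $\delta_{\overline{S}}(v)\le |S|-1-k$. Summing and using $|S|\,{\bf i}(\Gamma)\le \sum_{v\in S}\delta_{\overline{S}}(v)$ produces ${\bf i}(\Gamma)\le |S|-1-k$, i.e.\ $a_k^d(\Gamma)=|S|\ge {\bf i}(\Gamma)+k+1$.

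The main (really only) subtlety in both arguments is guaranteeing the isoperimetric inequality can be applied, since its definition requires the test set to have cardinality at most $n/2$; in (i) this is provided by averaging (taking the smallest block of a partition with $r\ge 2$ parts), and in (ii) it is provided by the partitionability hypothesis. After that, both proofs reduce to combining the pointwise estimates $\delta_{\overline{V_i}}(v)\le \Delta-(r-1)-k$ (respectively $\delta_{\overline{S}}(v)\le |S|-1-k$) with the isoperimetric lower bound on $\sum_{v}\delta_{\overline{S}}(v)$, so no further calculation is needed.
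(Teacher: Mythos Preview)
Your proof is correct and follows essentially the same route as the paper's: choose a block of size at most $n/2$, apply the isoperimetric inequality to it, and combine with the pointwise estimates coming from the alliance and domination conditions. Your treatment of (ii) is in fact slightly more careful than the paper's, since you explicitly argue that $a_k^d(\Gamma)\le n/2$ before applying the isoperimetric inequality to a minimum alliance, whereas the paper applies it to an unspecified $S$ with $|S|\le n/2$ and leaves implicit why this suffices to bound $a_k^d(\Gamma)$.
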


\begin{proof}
\mbox{ }

\begin{itemize}
\item[{\rm (i)}]
Let $\Pi_r(\Gamma)$ be a partition of $\Gamma$ into  $r\ge2$ global
defensive $k$-alliances. Then, there exists $V_i\in \Pi_r(\Gamma)$
such that $|V_i|\le \frac{n}{2}$. Hence,
 $|V_i|{\bf
i}(\Gamma)\le \displaystyle\sum_{v\in V_i}\delta_{\overline{V_i}}(v)
\le \sum_{v\in V_i}(\delta_{V_i}(v)-k)\le  \sum_{v\in
V_i}(\delta(v)-r+1-k)\le |V_i|(\Delta-r+1-k)$. Thus,
$r\le \Delta+1-{\bf i}(\Gamma)-k.$

\item[{\rm (ii)}] If $\psi_k^d(\Gamma)\ge 2$, then there exists a
defensive $k$-alliance $S$ such that $|S|\le \frac{n}{2}$.
Therefore, $\displaystyle |S|{\rm i}(\Gamma)\le \sum_{v\in
S}\delta_{\overline{S}}(v)\le \sum_{v\in S}(\delta_S(v)-k)\le
|S|(|S|-1)-k|S|$. Thus, the result follows.
\end{itemize}
\end{proof}




The following relation between the algebraic connectivity and the
isoperimetric number  of a graph was shown by Mohar in \cite{Mohar}:
${\bf i}(\Gamma)\ge \frac{\mu}{2}.$
\begin{corollary}\label{coromu} For any graph $\Gamma$,
\mbox{ }

\begin{itemize}
\item[{\rm (i)}] if  $\Gamma$ is partitionable into  global defensive
$k$-alliances, then $$\psi_k^{gd}(\Gamma) \le
\left\lfloor\Delta+1-\frac{\mu}{2}-k\right\rfloor,$$

\item[{\rm (ii)}] if $\Gamma$ is partitionable into defensive $k$-alliances,  then  $$a_k^d(\Gamma)\ge
\left\lceil\frac{\mu+2(k+1)}{2}\right\rceil.$$
\end{itemize}
\end{corollary}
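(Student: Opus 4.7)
The plan is to combine Mohar's inequality ${\bf i}(\Gamma) \ge \frac{\mu}{2}$, stated just before the corollary, with the previous theorem bounding $\psi_k^{gd}(\Gamma)$ and $a_k^d(\Gamma)$ in terms of the isoperimetric number, and then invoke integrality to introduce the floor and ceiling.

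For part (i), I would start from the preceding theorem, which asserts that $\psi_k^{gd}(\Gamma) \le \Delta + 1 - {\bf i}(\Gamma) - k$ whenever $\Gamma$ is partitionable into global defensive $k$-alliances. Since the isoperimetric number enters with a negative sign, Mohar's lower bound ${\bf i}(\Gamma) \ge \mu/2$ gives an upper bound $\Delta + 1 - \mu/2 - k$ on the same quantity. Finally, because $\psi_k^{gd}(\Gamma)$ is an integer, the inequality survives application of $\lfloor \cdot \rfloor$, yielding the stated bound.

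For part (ii), the argument is dual: the preceding theorem gives $a_k^d(\Gamma) \ge {\bf i}(\Gamma) + k + 1$, and substituting ${\bf i}(\Gamma) \ge \mu/2$ produces $a_k^d(\Gamma) \ge \mu/2 + k + 1 = (\mu + 2(k+1))/2$. As $a_k^d(\Gamma)$ is an integer, the ceiling can be inserted on the right, which is exactly the claim.

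The main ``obstacle,'' if any, is essentially clerical: one must check that both inequalities are compatible with monotonicity of $\lfloor \cdot \rfloor$ and $\lceil \cdot \rceil$ over the reals, which is immediate. No new combinatorial argument is required beyond quoting the preceding theorem and Mohar's inequality, so the proof will be just a few lines.
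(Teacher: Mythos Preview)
Your proposal is correct and matches the paper's approach exactly: the corollary is stated immediately after Mohar's inequality ${\bf i}(\Gamma)\ge \mu/2$ and the preceding theorem, and is obtained simply by substituting the former into the latter and invoking integrality. No additional argument is needed.
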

Example of equality in above bounds is the graph $\Gamma=C_3\times
C_3$ for $k=0$,  in this case $\mu=3$.

From above corollary, we emphasize that if  $\mu > 2(\Delta-1-k)$,
then $\Gamma$ cannot be partitioned into global defensive
$k$-alliances. For instance, we conclude that $\Gamma=C_3\times C_3$
cannot be partitioned into global defensive $k$-alliances for $k>1$.
Moreover, by Corollary \ref{coromu} (ii) we conclude, if
$a_k^d(\Gamma)< \left\lceil\frac{\mu+2(k+1)}{2}\right\rceil$, then
$\Gamma$ cannot be partitioned into defensive $k$-alliances.

A {\it bisection} of $\Gamma$ is a 2-partition $\{X,Y\}$ of the
vertex set $V(\Gamma)$ in which $\left| X\right| =\left| Y\right| $
or $\left| X\right| =\left| Y\right| +1.$ The bisection problem is
to find a bisection for which $\sum_{v\in X}\delta_Y(v)$ is as small
as possible.  The \mbox{\it bipartition width}, $bw(\Gamma)$, is
defined as $$ bw(\Gamma):=\displaystyle\min_{X\subset V(\Gamma),
\left| X \right| =\left \lfloor\frac{n}{2} \right\rfloor } \left\{
\sum_{v\in X}\delta_{\overline{X}}(v) \right\}.$$

It was shown by  Merris \cite{Merris} and Mohar
 \cite{Mohar} that
$$  bw(\Gamma) \geq
     \left\lbrace \begin{array}{llll}
                \left \lceil  \frac{n\mu }{4} \right \rceil & \mbox{\it if  $n$ is even;}
                    \\

                    \\
                \left \lceil  \frac{(n^2-1)\mu }{4n} \right \rceil & \mbox{\it if  $n$ is odd.}
                \end{array}
\right.   $$

We are interested in the bisection of a graph into  global defensive
$k$-alliances, i.e., the bisection  $\{X,Y\}$ of $V$ such that $X$
and $Y$ are global defensive $k$-alliances. An example of bisection
into global defensive (t-1)-alliances is obtained for the family of
hypercube graphs $Q_{t+1}=Q_t\times K_2$, by taking $\{X,Y\}$ such
that $\langle X\rangle\cong Q_t\cong\langle Y\rangle$.

By Theorem \ref{cortearistas} (iii) and the above bound we obtain
the following result.

\begin{corollary}\label{nobisection}
If $\left\lfloor\frac{2m-nk}{4}\right\rfloor < \left\lceil
\frac{n\mu }{4}\right\rceil$, for $n$ even, or
$\left\lfloor\frac{2m-nk}{4}\right\rfloor < \left\lceil
\frac{(n^2-1)\mu }{4n}\right\rceil$, for $n$ odd, then $\Gamma$
cannot be   bisectioned into global defensive $k$-alliances.
\end{corollary}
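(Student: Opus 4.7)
The plan is to derive a contradiction by assuming such a bisection exists and pitting the upper bound from Theorem \ref{cortearistas}(iii) against the Merris--Mohar lower bound on $bw(\Gamma)$.

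First I would suppose that $\Gamma$ admits a bisection $\{X,Y\}$ in which both $X$ and $Y$ are global defensive $k$-alliances. This is exactly a partition $\Pi_2(\Gamma)$ of $\Gamma$ into $r=2$ global defensive $k$-alliances, so Theorem \ref{cortearistas}(iii) applies and yields
$$C_{(2,k)}^{gd}(\Gamma)\;\le\;\frac{2m-nk}{4}.$$
Because $C_{(2,k)}^{gd}(\Gamma)$ counts edges, it is an integer, and so we may tighten the above inequality to $C_{(2,k)}^{gd}(\Gamma)\le\lfloor (2m-nk)/4\rfloor$.

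Next I would connect $C_{(2,k)}^{gd}(\Gamma)$ with the bipartition width. In the case $r=2$, the quantity $C_{(2,k)}^{gd}(\Gamma)$ is precisely the number of edges with one endpoint in $X$ and the other in $Y$, so $C_{(2,k)}^{gd}(\Gamma)=\sum_{v\in X}\delta_{\overline{X}}(v)$. Since $\{X,Y\}$ is a bisection, $|X|=\lfloor n/2\rfloor$ (up to relabeling), and hence by definition
$$C_{(2,k)}^{gd}(\Gamma)\;\ge\;bw(\Gamma).$$
Invoking the Merris--Mohar lower bound recalled just before the corollary, we then have $C_{(2,k)}^{gd}(\Gamma)\ge\lceil n\mu/4\rceil$ when $n$ is even and $C_{(2,k)}^{gd}(\Gamma)\ge\lceil (n^2-1)\mu/(4n)\rceil$ when $n$ is odd.

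Combining the two inequalities gives $\lceil n\mu/4\rceil\le\lfloor (2m-nk)/4\rfloor$ (respectively the odd analogue), which directly contradicts the hypothesis of the corollary. I expect no serious obstacle: the only subtlety is the step of replacing the upper bound by its floor using integrality of the cut, and of identifying $C_{(2,k)}^{gd}(\Gamma)$ with the edge boundary of one side of the bisection, both of which are immediate from the definitions.
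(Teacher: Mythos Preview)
Your approach is exactly the paper's: combine Theorem~\ref{cortearistas}(iii) with the Merris--Mohar lower bound on $bw(\Gamma)$ to force a contradiction. One small slip worth tightening: $C_{(2,k)}^{gd}(\Gamma)$ is by definition the \emph{minimum} cut over all $2$-partitions into global defensive $k$-alliances, not the cut of your particular bisection $\{X,Y\}$, so the identification ``$C_{(2,k)}^{gd}(\Gamma)=\sum_{v\in X}\delta_{\overline X}(v)$'' and hence ``$C_{(2,k)}^{gd}(\Gamma)\ge bw(\Gamma)$'' is not justified (the minimizing partition need not be a bisection). The fix is immediate: work with the edge cut $c(X,Y)$ of the given bisection directly, observing that the computation in the proof of Theorem~\ref{cortearistas}(iii) bounds the cut of \emph{any} partition into defensive $k$-alliances, so $c(X,Y)\le\lfloor(2m-nk)/4\rfloor$, while $c(X,Y)\ge bw(\Gamma)$ by definition.
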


For example, according to Corollary \ref{nobisection} we can
conclude that, for $k>0$, the graph $C_3\times C_3$ cannot be
bisectioned into global defensive $k$-alliances.




\section{Partitioning $\Gamma_1\times \Gamma_2$ into (global) defensive $k$-alliances}

In  Subsection \ref{partitioningNoglobal} we will discuss the  close
relationships that exist among  $\psi_{k_1+k_2}^{d}(\Gamma_1\times
\Gamma_2)$ and $ \psi_{k_i}^{d}(\Gamma_i)$, $i\in \{1,2\}$.
Obviously, we begin with the study of the relationship between
$a_{k_1+k_2}^{d}(\Gamma_1\times \Gamma_2)$ and $a_{k_i}^{d}
(\Gamma_i)$, $i\in \{1,2\}$. The case of global alliances will be
studied in Subsection \ref{partitioningGlobal}.

\subsection{Partitioning $\Gamma_1\times \Gamma_2$ into defensive
$k$-alliances}\label{partitioningNoglobal}

\begin{theorem}\label{CartesianNG}  For any graphs $\Gamma_1$ and $\Gamma_2$,
\begin{itemize}
\item[{\rm (i)}] if $\Gamma_i$ contains a defensive $k_i$-alliance,  $i\in
\{1,2\}$, then $\Gamma_1\times \Gamma_2$ contains a defensive
$(k_1+k_2)$-alliance and $$a_{k_1+k_2}^{d}(\Gamma_1\times
\Gamma_2)\le a_{k_1}^{d} (\Gamma_1)a_{k_2}^{d} (\Gamma_2),$$
\item[{\rm (ii)}] if there exists a partition of $\Gamma_i$ into defensive $k_i$-alliances,  $i\in
\{1,2\}$, then there exists a partition of $\Gamma_1\times \Gamma_2$
into defensive $(k_1+k_2)$-alliances and
$$\psi_{k_1+k_2}^{d}(\Gamma_1\times \Gamma_2) \ge
\psi_{k_1}^{d}(\Gamma_1)\psi_{k_2}^{d}(\Gamma_2).$$
 \end{itemize}
\end{theorem}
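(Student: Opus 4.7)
My plan is to exploit the standard edge structure of the Cartesian product $\Gamma_1 \times \Gamma_2$: a vertex $(v_1,v_2)$ has as neighbors exactly the pairs $(u,v_2)$ with $u \sim v_1$ in $\Gamma_1$ and the pairs $(v_1,u)$ with $u \sim v_2$ in $\Gamma_2$. This means neighborhoods split cleanly across the two coordinates, and the whole argument reduces to coordinate-wise addition of the defining inequality $\delta_{S}(v) \ge \delta_{\bar{S}}(v)+k$.

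For part (i), I would pick defensive $k_i$-alliances $S_i \subseteq V(\Gamma_i)$ of minimum cardinality, set $S := S_1 \times S_2$, and take any $(v_1,v_2) \in S$. Using the product structure and the fact that $v_i \in S_i$, one checks directly that
\[
\delta_S((v_1,v_2)) = \delta_{S_1}(v_1) + \delta_{S_2}(v_2), \qquad \delta_{\bar S}((v_1,v_2)) = \delta_{\bar{S_1}}(v_1) + \delta_{\bar{S_2}}(v_2).
\]
Adding the defensive $k_i$-alliance conditions for $v_1$ and $v_2$ then yields $\delta_S((v_1,v_2)) \ge \delta_{\bar S}((v_1,v_2)) + (k_1+k_2)$, so $S$ is a defensive $(k_1+k_2)$-alliance. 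Since $|S| = |S_1|\,|S_2|$, the inequality on $a_{k_1+k_2}^d$ follows.

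For part (ii), given partitions $\Pi_{r_1}(\Gamma_1) = \{A_1,\dots,A_{r_1}\}$ and $\Pi_{r_2}(\Gamma_2) = \{B_1,\dots,B_{r_2}\}$ into defensive $k_1$- and $k_2$-alliances respectively, the natural candidate partition of $\Gamma_1 \times \Gamma_2$ is the product partition
\[
\bigl\{A_i \times B_j : 1 \le i \le r_1,\ 1 \le j \le r_2 \bigr\}.
\]
This is clearly a partition of $V(\Gamma_1) \times V(\Gamma_2)$ into $r_1 r_2$ blocks, and part (i) applied to each pair $(A_i,B_j)$ shows that every block is a defensive $(k_1+k_2)$-alliance. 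Hence $\psi_{k_1+k_2}^d(\Gamma_1 \times \Gamma_2) \ge r_1 r_2$, and maximizing over $r_i = \psi_{k_i}^d(\Gamma_i)$ gives the stated bound.

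The only delicate step is the first one: correctly identifying $\delta_S$ and $\delta_{\bar S}$ as sums of coordinate-wise degrees, which crucially uses that the chosen vertex lies in \emph{both} $S_1$ and $S_2$ (so $(u,v_2) \in S$ iff $u \in S_1$, and similarly in the other coordinate). Once this coordinate decomposition is in place, the rest is purely formal and does not touch the parameter ranges $k_i \in \{-\Delta(\Gamma_i),\dots,\Delta(\Gamma_i)\}$. I do not anticipate any serious obstacle, as no global/domination condition is involved here; that added complication is deferred to the global analogue of the theorem.
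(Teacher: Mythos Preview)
Your proposal is correct and follows essentially the same argument as the paper: take $S_1\times S_2$, decompose $\delta_S$ and $\delta_{\bar S}$ coordinate-wise using the Cartesian product structure, add the two defensive inequalities, and for (ii) use the product partition $\{A_i\times B_j\}$. The paper's proof is identical in substance, differing only in notation.
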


\begin{proof}
Let  $S_i$ be a defensive $k_i$-alliance  in $\Gamma_i$, $i\in
\{1,2\}$, and let  $X=S_1\times S_2$. Then for every $x=(u,v)\in X$,
\begin{align*}\delta_{X}(x)&=\delta_{S_1}(u)+\delta_{S_2}(v)\\ &
\ge
\left(\delta_{\bar{S_1}}(u)+k_1\right)+\left(\delta_{\bar{S_2}}(v)+k_2\right)
\\&=\delta_{\bar{X}}(x)+k_1+k_2.\end{align*} Thus, $X$ is a
defensive $(k_1+k_2)$-alliance in $\Gamma_1\times \Gamma_2$ and, as
a consequence,  (i) follows. Moreover, we conclude that every
partition
$$\Pi_{r_i}(\Gamma_i)=\{S_{1}^{(i)},S_{2}^{(i)},...,S_{r_i}^{(i)}\}$$
 of $\Gamma_i$ into $r_i$  defensive $k_i$-alliances
 induces a partition  of $\Gamma_1\times \Gamma_2$
into $r_1r_2$ defensive $(k_1+k_2)$-alliances:
$$\Pi_{r_1r_2}(\Gamma_1\times \Gamma_2)=\left\{
\begin{array}{ccc}
  S_{1}^{(1)}\times S_{1}^{(2)} & \cdots & S_{1}^{(1)}\times S_{r_2}^{(2)} \\
  S_{2}^{(1)}\times S_{1}^{(2)} & \cdots & S_{2}^{(1)}\times S_{r_2}^{(2)} \\
 \vdots & \vdots & \vdots \\
  S_{r_1}^{(1)}\times S_{1}^{(2)} & \cdots & S_{r_1}^{(1)}\times S_{r_2}^{(2)}
\end{array}\right\} .
$$
Therefore, (ii) follows.
\end{proof}

In the particular case of the Petersen graph, $P$, and the $3$-cube
graph, $Q_3$, we have $a_{-2}^{d}(P\times Q_3) = 4=
a_{-1}^d(P)a_{-1}^d(Q_3)$, $\psi_{-2}^{d}(P\times Q_3) =20=
\psi_{-1}^{d}(P)\psi_{-1}^{d}(Q_3)$ and  $16=a_{2}^{d}(P\times Q_3)
< a_{1}^d(P)a_{1}^d(Q_3)=20$, $5=\psi_{2}^{d}(P\times Q_3) >
\psi_{1}^{d}(P)\psi_{1}^{d}(Q_3)=4$.

An example where we cannot apply Theorem \ref{CartesianNG} (i) is the book graph
$\Gamma_1\times \Gamma_2=K_{1,4}\times K_2$, for $k_1=2$ and
$k_2=0$; the star graph $\Gamma_1=K_{1,4}$ does not contain
defensive $2$-alliances, although $\Gamma_1\times \Gamma_2$ contains
some of them and $a_2^{d}(\Gamma_1\times \Gamma_2)=8$.

We note that from Theorem \ref{CartesianNG} we obtain $
a_{2k}^{d}(\Gamma_1\times \Gamma_2)\le a_{k }^{d} (\Gamma_1)a_{k
}^{d} (\Gamma_2)$ and $\psi_{2k}^{d}(\Gamma_1\times \Gamma_2)\ge
\psi_{k}^{d}(\Gamma_1)\psi_{k}^{d}( \Gamma_2).$ Another interesting
consequence of Theorem \ref{CartesianNG} is the following.

\begin{corollary} Let $\Gamma_1$ and $\Gamma_2$ be two graphs of order $n_1$ and $n_2$ and  maximum
 degree $\Delta_1$ and $\Delta_2$, respectively. Let $s\in
 \mathbb{Z}$ such that
 $\max\{\Delta_1,\Delta_2\}\le s\le \Delta_1+\Delta_2+k$. Then

\begin{itemize}

\item[{\rm (i)}] $a_{_{k-s}}^{d}(\Gamma_1\times \Gamma_2)\le \min \{a_{k}^{d}
(\Gamma_1),a_{k}^{d} (\Gamma_2)\}$,

\item[{\rm (ii)}] $\psi_{k-s}^{d}(\Gamma_1\times
\Gamma_2) \ge \max
\{n_2\psi_k^{d}(\Gamma_1),n_1\psi_k^{d}(\Gamma_2)\}$.

 \end{itemize}
\end{corollary}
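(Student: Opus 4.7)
The plan is to derive both inequalities from Theorem \ref{CartesianNG} by choosing a \emph{trivial} alliance on one of the factors. The key observation I would make first is that when $s\ge \Delta_i$, every singleton $\{v\}\subseteq V(\Gamma_i)$ is a defensive $(-s)$-alliance of $\Gamma_i$: the defining inequality $\delta_{\{v\}}(v)\ge \delta_{\overline{\{v\}}}(v)-s$ reduces to $s\ge \delta(v)$, which is automatic. Consequently, the partition of $V(\Gamma_i)$ into its singleton classes is a partition into $n_i$ defensive $(-s)$-alliances.

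For (i), I would assume $s\ge \Delta_2$ and let $S_1$ be a defensive $k$-alliance in $\Gamma_1$ of cardinality $a_k^{d}(\Gamma_1)$. Picking any $v\in V(\Gamma_2)$, the singleton $S_2=\{v\}$ is a defensive $(-s)$-alliance of $\Gamma_2$ by the observation above. Theorem \ref{CartesianNG}(i) applied with $k_1=k$ and $k_2=-s$ then yields
$$a_{k-s}^{d}(\Gamma_1\times \Gamma_2)\le a_k^{d}(\Gamma_1)\cdot 1=a_k^{d}(\Gamma_1).$$
Interchanging the roles of $\Gamma_1$ and $\Gamma_2$ (valid whenever $s\ge \Delta_1$) gives $a_{k-s}^{d}(\Gamma_1\times \Gamma_2)\le a_k^{d}(\Gamma_2)$. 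Since $s\ge \max\{\Delta_1,\Delta_2\}$, both inequalities hold simultaneously, producing the claimed minimum.

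For (ii), I would combine an optimal partition of $\Gamma_1$ into $\psi_k^{d}(\Gamma_1)$ defensive $k$-alliances with the singleton partition of $V(\Gamma_2)$, which, by the initial observation, is a partition of $\Gamma_2$ into $n_2$ defensive $(-s)$-alliances. Theorem \ref{CartesianNG}(ii) then delivers a partition of $\Gamma_1\times \Gamma_2$ into $n_2\,\psi_k^{d}(\Gamma_1)$ defensive $(k-s)$-alliances, so
$$\psi_{k-s}^{d}(\Gamma_1\times \Gamma_2)\ge n_2\,\psi_k^{d}(\Gamma_1).$$
The symmetric argument, valid when $s\ge \Delta_1$, gives $\psi_{k-s}^{d}(\Gamma_1\times \Gamma_2)\ge n_1\,\psi_k^{d}(\Gamma_2)$, and taking the maximum finishes the bound.

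There is no real obstacle: the hypothesis $\max\{\Delta_1,\Delta_2\}\le s$ is exactly what is needed to legitimize the singleton alliance on either factor, while the upper bound $s\le \Delta_1+\Delta_2+k$ is the sanity check ensuring that $k-s\ge -(\Delta_1+\Delta_2)$ lies in the admissible range for a defensive alliance parameter in $\Gamma_1\times \Gamma_2$, whose maximum degree is $\Delta_1+\Delta_2$. The only mildly delicate point worth flagging in the write-up is that the existence of some defensive $k$-alliance in $\Gamma_1$ (respectively $\Gamma_2$) is tacitly required for the right-hand sides to be meaningful, but this is built into the notation $a_k^{d}$ and $\psi_k^{d}$ being referenced.
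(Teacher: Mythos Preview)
Your argument is correct and is precisely the derivation the paper has in mind: the corollary is stated as an immediate consequence of Theorem~\ref{CartesianNG}, and your choice of the singleton alliance (and singleton partition) on the second factor with $k_2=-s$ is exactly how one unpacks that consequence. Your remark about the role of the hypothesis $s\le \Delta_1+\Delta_2+k$ is also the intended reading.
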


As example of equalities we take $\Gamma_1=P$, $\Gamma_2=Q_3$, $k=1$
and $s=3$. In such a case,  $4=a_{-2}^{d}(P\times Q_3)= \min
\{a_{1}^{d} (P),a_{1}^{d} (Q_3)\}=\min\{5,4\}$ and
$20=\psi_{-2}^{d}(P\times Q_3) = \max
\{8\psi_1^{d}(P),10\psi_1^{d}(Q_3)\}=\max\{16,20\}$.

\subsection{Partitioning $\Gamma_1\times \Gamma_2$ into global defensive
$k$-alliances}\label{partitioningGlobal}

\begin{theorem}\label{ThGlobCartesian} Let $\Pi_{r_i}(\Gamma_i)$ be a partition of a graph $\Gamma_i$, of order
$n_i$, into $r_i\ge 1$ global defensive $k_i$-alliances, $i\in
\{1,2\}$, $r_1\le r_2$. Let $x_i=\displaystyle\min_{X\in
\Pi_{r_i}(\Gamma_i)}\{|X|\}$. Then,

\begin{itemize}

\item[{\rm (i)}] $\displaystyle\gamma_{_{k_1+k_2}}^{d}(\Gamma_1\times \Gamma_2)\le
\min\left\{x_1n_2,x_2n_1 \right\},$
\item[{\rm (ii)}] $\displaystyle\psi_{k_1+k_2}^{gd}(\Gamma_1\times
\Gamma_2) \ge \max\left\{\psi_{k_1}^{gd}(\Gamma_1),\psi_{k_2}^{gd}(\Gamma_2)\right\}$.
 \end{itemize}
\end{theorem}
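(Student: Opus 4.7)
The plan is to construct explicit global defensive $(k_1+k_2)$-alliances in $\Gamma_1\times \Gamma_2$ directly from the given partitions of the factors, and then verify the alliance and domination conditions by decomposing degrees across the two coordinates of the Cartesian product. Recall that a vertex $(u,v)$ is adjacent to $(u',v')$ in $\Gamma_1\times\Gamma_2$ iff either $u=u'$ and $v\sim v'$, or $v=v'$ and $u\sim u'$; consequently, for any sets $A\subseteq V(\Gamma_1)$ and $B\subseteq V(\Gamma_2)$ one has
\[
\delta_{A\times B}(u,v)=\delta_A(u)\cdot[v\in B]+\delta_B(v)\cdot[u\in A],
\]
which is the main identity I will exploit.

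The key preliminary observation is that whenever $\Gamma$ admits a partition into defensive $k$-alliances, every vertex $u$ of $\Gamma$ satisfies $\delta_\Gamma(u)\ge k$. Indeed, $u$ lies in some alliance $S$, and then $\delta_\Gamma(u)=\delta_S(u)+\delta_{\bar S}(u)\ge 2\delta_{\bar S}(u)+k\ge k$. This fact is what will let the ``outer factor'' in the construction contribute enough to absorb the $k$-parameter that it brings into the product.

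For part (i), I would pick a smallest class $S_1\in\Pi_{r_1}(\Gamma_1)$, so $|S_1|=x_1$, and set $W:=S_1\times V(\Gamma_2)$, which has cardinality $x_1n_2$. For $(u,v)\in W$, the identity above gives $\delta_W(u,v)=\delta_{S_1}(u)+\delta_{\Gamma_2}(v)$ and $\delta_{\overline W}(u,v)=\delta_{\bar{S_1}}(u)$. Combining the alliance condition $\delta_{S_1}(u)\ge \delta_{\bar{S_1}}(u)+k_1$ with the preliminary bound $\delta_{\Gamma_2}(v)\ge k_2$ yields the $(k_1+k_2)$-alliance inequality. Domination is equally direct: any $(u,v)\notin W$ has $u\in\bar{S_1}$, and since $S_1$ dominates $\Gamma_1$ there is some $u'\in S_1$ with $u\sim u'$, whence $(u',v)\in W$ is a neighbor of $(u,v)$. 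Thus $W$ is a global defensive $(k_1+k_2)$-alliance and $\gamma_{k_1+k_2}^d(\Gamma_1\times\Gamma_2)\le x_1n_2$; interchanging the roles of $\Gamma_1$ and $\Gamma_2$ gives $\le x_2n_1$, and the two bounds combine into the minimum.

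For part (ii), I would argue by symmetry and suppose $\psi_{k_2}^{gd}(\Gamma_2)\ge \psi_{k_1}^{gd}(\Gamma_1)$; take $\{T_1,\dots,T_{r_2}\}$ to be a partition of $\Gamma_2$ into $r_2=\psi_{k_2}^{gd}(\Gamma_2)$ global defensive $k_2$-alliances and consider the family
\[
\Pi:=\bigl\{\,V(\Gamma_1)\times T_j : 1\le j\le r_2\,\bigr\}.
\]
This is clearly a partition of $V(\Gamma_1\times\Gamma_2)$ into $r_2$ classes. Repeating the degree computation from part (i) with $V(\Gamma_1)$ in place of $S_1$ (so that the first-coordinate contribution to $\delta_{\overline{V(\Gamma_1)\times T_j}}$ vanishes and $\delta_{\Gamma_1}(u)\ge k_1$ by the preliminary observation) shows that each class is a defensive $(k_1+k_2)$-alliance; domination follows because $T_j$ dominates $\Gamma_2$. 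Hence $\psi_{k_1+k_2}^{gd}(\Gamma_1\times\Gamma_2)\ge r_2$, and applying the same argument to the partition of $\Gamma_1$ yields the lower bound with the maximum. I do not expect a real obstacle here: the verifications are routine once the adjacency decomposition in the Cartesian product is written out, and the preliminary observation handles the only delicate point, namely why the ``outer'' factor can afford to contribute its share $k_i$ of the combined alliance parameter.
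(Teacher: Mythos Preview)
Your proposal is correct and follows essentially the same route as the paper: both construct the global defensive $(k_1+k_2)$-alliances $S\times V(\Gamma_2)$ and $V(\Gamma_1)\times T$ from alliances $S$, $T$ in the factors, verify the alliance and domination conditions via the coordinate-wise degree decomposition in the Cartesian product, and then read off (i) from a smallest class and (ii) from the full partition. The paper condenses these verifications by invoking the computation in the proof of Theorem~\ref{CartesianNG} with one factor taken to be the whole vertex set; your ``preliminary observation'' $\delta_{\Gamma_i}(v)\ge k_i$ is exactly what makes $V(\Gamma_i)$ itself a defensive $k_i$-alliance, so the two arguments coincide.
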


\begin{proof}
From the procedure showed in the proof of Theorem \ref{CartesianNG}
we obtain that for every $S_j^{(1)}\in \Pi_{r_1}(\Gamma_1)$ and every $S_l^{(2)}\in
\Pi_{r_2}(\Gamma_2)$, the sets $M_j=S_j^{(1)}\times V_2$  and $N_l= V_1\times S_l^{(2)}$ are defensive $(k_1+k_2)$-alliances in $\Gamma_1\times \Gamma_2$. Moreover
$M_j$ and $N_l$ are dominating sets in $\Gamma_1\times \Gamma_2$. Thus,
by taking $S_{j}^{(1)}$ and $S_{l}^{(2)}$ of cardinality $x_1$ and
$x_2$, respectively, we obtain $|M_j|= x_1n_2$ and $|N_l|= x_2n_1$,
so (i) follows. Moreover, as $\{M_1,...,M_{r_1}\}$ and
$\{N_1,...,N_{r_2}\}$ are partitions of $\Gamma_1\times \Gamma_2$
into global defensive $(k_1+k_2)$-alliances, (ii) follows.
\end{proof}


\begin{corollary}\label{coroProduct}
If  $\Gamma_i$ is a graph  of order $n_i$ such that
$\psi_{k_i}^{gd}(\Gamma_i)\ge 1$, $i\in \{1,2\}$, then
$$\displaystyle\gamma_{_{k_1+k_2}}^{d}(\Gamma_1\times \Gamma_2)\le
\frac{n_1n_2}{\max_{i\in \{1,2\}}\left\{\psi_{k_i}^{gd}(\Gamma_i)
\right\}}.$$
\end{corollary}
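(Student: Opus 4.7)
The plan is to apply Theorem \ref{ThGlobCartesian}(i) with a well-chosen partition of each factor, and then simplify the resulting minimum. Specifically, for $i\in\{1,2\}$, let $\Pi_{r_i}(\Gamma_i)$ be a partition of $\Gamma_i$ into $r_i=\psi_{k_i}^{gd}(\Gamma_i)$ global defensive $k_i$-alliances, which exists by hypothesis. The theorem then guarantees that $\gamma_{k_1+k_2}^{d}(\Gamma_1\times\Gamma_2)\le \min\{x_1 n_2, x_2 n_1\}$, where $x_i$ is the minimum cardinality of a set in $\Pi_{r_i}(\Gamma_i)$.

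Next I would invoke a pigeonhole observation: since $\Pi_{r_i}(\Gamma_i)$ is a partition of $V(\Gamma_i)$ (whose total cardinality is $n_i$) into $r_i$ parts, the smallest part satisfies
\[
x_i \le \frac{n_i}{r_i}=\frac{n_i}{\psi_{k_i}^{gd}(\Gamma_i)}.
\]
Substituting this into the bound of Theorem \ref{ThGlobCartesian}(i) yields
\[
\gamma_{k_1+k_2}^{d}(\Gamma_1\times\Gamma_2)\le \min\left\{\frac{n_1 n_2}{\psi_{k_1}^{gd}(\Gamma_1)},\ \frac{n_1 n_2}{\psi_{k_2}^{gd}(\Gamma_2)}\right\}.
\]

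Finally, since the two numerators agree, taking the minimum is equivalent to taking the reciprocal of the maximum of the denominators, which gives exactly $n_1 n_2 /\max_{i\in\{1,2\}}\{\psi_{k_i}^{gd}(\Gamma_i)\}$, as desired. There is no real obstacle here: the only thing to notice beyond invoking the previous theorem is the pigeonhole estimate $x_i\le n_i/\psi_{k_i}^{gd}(\Gamma_i)$, obtained by choosing $\Pi_{r_i}$ to be a partition of maximum cardinality $r_i=\psi_{k_i}^{gd}(\Gamma_i)$.
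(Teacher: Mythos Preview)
Your argument is correct and is exactly the derivation the paper has in mind: the corollary is stated immediately after Theorem~\ref{ThGlobCartesian} without proof, and the only missing step is the pigeonhole estimate $x_i\le n_i/\psi_{k_i}^{gd}(\Gamma_i)$ obtained by choosing a maximum partition, just as you did. (The hypothesis $r_1\le r_2$ in Theorem~\ref{ThGlobCartesian} is only a labeling convention and does not affect your application.)
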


\begin{theorem}\label{ThGanmaN2} If  $\Gamma_1$ contains a global defensive
$k_1$-alliance, then for every $k_2\in \{-\Delta_2,...,\delta_2\}$, $\Gamma_1\times \Gamma_2$ contains a global
defensive $(k_1+k_2)$-alliance and $
\gamma_{_{k_1+k_2}}^{d}(\Gamma_1\times \Gamma_2)\le
\gamma_{_{k_1}}^d(\Gamma_1)n_2.$
\end{theorem}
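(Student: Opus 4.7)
The plan is to mimic the construction used in Theorem \ref{CartesianNG} and Theorem \ref{ThGlobCartesian}, taking a minimum global defensive $k_1$-alliance $S_1$ in $\Gamma_1$ and blowing it up across all of $V_2$. Concretely, let $S = S_1 \times V_2$, so that $|S| = \gamma_{k_1}^d(\Gamma_1)\, n_2$, and I claim $S$ is a global defensive $(k_1+k_2)$-alliance in $\Gamma_1 \times \Gamma_2$.

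For any $x = (u,v) \in S$, the Cartesian product structure splits the neighborhood of $x$ into a ``horizontal'' part coming from $N_{\Gamma_1}(u) \times \{v\}$ and a ``vertical'' part coming from $\{u\} \times N_{\Gamma_2}(v)$. Since $\bar S = \bar{S_1}\times V_2$, the vertical neighbors all lie in $S$, and the horizontal neighbors split according to whether they belong to $S_1$ or $\bar{S_1}$. Hence
$$\delta_S(x) = \delta_{S_1}(u) + \delta_2(v), \qquad \delta_{\bar S}(x) = \delta_{\bar{S_1}}(u).$$
Using that $S_1$ is a defensive $k_1$-alliance we have $\delta_{S_1}(u) \ge \delta_{\bar{S_1}}(u) + k_1$, and using $k_2 \le \delta_2 \le \delta_2(v)$ we have $\delta_2(v) \ge k_2$. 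Adding the two inequalities gives $\delta_S(x) \ge \delta_{\bar S}(x) + (k_1 + k_2)$, which is exactly the defensive $(k_1+k_2)$-alliance condition.

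To check that $S$ dominates $\Gamma_1\times\Gamma_2$, take any $(u,v) \in \bar S$, so $u \in \bar{S_1}$. Since $S_1$ is a dominating set in $\Gamma_1$, there exists $u' \in S_1$ with $u' \sim u$, and then $(u',v) \in S$ is adjacent to $(u,v)$ in $\Gamma_1\times \Gamma_2$. Thus $S$ is a global defensive $(k_1+k_2)$-alliance, yielding the desired bound.

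There is essentially no hard step; the only thing to be careful about is the role of $k_2$. The hypothesis $k_2 \in \{-\Delta_2,\dots,\delta_2\}$ is exactly what is needed to guarantee $k_2 \le \delta_2(v)$ for every $v \in V_2$, which is the slack that compensates for enlarging the alliance parameter from $k_1$ to $k_1+k_2$. Note also that no analogous hypothesis $\psi_{k_2}^{gd}(\Gamma_2)\ge 1$ is required, in contrast with Theorem \ref{ThGlobCartesian}, because we only need $\Gamma_2$ itself, not a defensive $k_2$-alliance inside it.
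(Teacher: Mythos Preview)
Your proof is correct and follows essentially the same approach as the paper, which merely says to mimic the procedure from Theorem~\ref{ThGlobCartesian}~(i). You have simply made explicit the computation that $S_1\times V_2$ is a global defensive $(k_1+k_2)$-alliance, using $k_2\le\delta_2\le\delta_2(v)$ in place of invoking that $V_2$ is itself a (trivial) defensive $k_2$-alliance; the two viewpoints are equivalent.
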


\begin{proof} Following a similar procedure used in the proof of
Theorem \ref{ThGlobCartesian} (i) we deduce the result.
\end{proof}

For the graph $\Gamma_1\times \Gamma_2=C_4\times Q_3$, by taking
$k_1=0$ and $k_2=1$, we obtain equalities in Theorem
\ref{ThGlobCartesian}, Corollary \ref{coroProduct} and Theorem
\ref{ThGanmaN2}.


\section*{Acknowledgments}
This work was partly supported by the Spanish Ministry of Science
and Innovation through projects TSI2007-65406-C03-01 ``E-AEGIS",
CONSOLIDER INGENIO 2010 CSD2007-0004 "ARES"  and  MTM2009-09501, by
the Rovira i Virgili University through project 2006AIRE-09 and by
the Junta de Andaluc\'ia, ref. FQM-260 and ref. P06-FQM-02225.

\end{document}